\providecommand{\U}[1]{\protect\rule{.1in}{.1in}}
\newtheorem{theorem}{Theorem}
\theoremstyle{plain}
\newtheorem{corollary}{Corollary}
\newtheorem{lemma}{Lemma}
\newtheorem{remark}{Remark}
\numberwithin{equation}{section}
\begin{document}
\title[Irrationality exponents of generalized Hone series]{Irrationality exponents of generalized Hone series}
\author{Daniel Duverney, Takeshi Kurosawa and Iekata Shiokawa}
\address{ }
\email{ }
\date{Dec. 2019}
\subjclass{ }
\keywords{}

\begin{abstract}
We compute the exact irrationality exponents of certain series of rational
numbers, first studied in a special case by Hone, by transforming them into
suitable continued fractions.

\end{abstract}
\maketitle

\section{Main theorem}

For a real number $\alpha,$ the irrationality exponent $\mu\left(
\alpha\right)  $ is defined by the infimum of the set of numbers $\mu$ for
which the inequality%
\begin{equation}
\left\vert \alpha-\frac{p}{q}\right\vert <\frac{1}{q^{\mu}} \label{1.1}%
\end{equation}
has only finitely many rational solutions $p/q$, or equivalently the supremum
of the set of numbers $\mu$ for which the inequality (\ref{1.1}) has
infinitely many solutions. If $\alpha$ is irrational, then $\mu\left(
\alpha\right)  \geq2$. If $\alpha$ is a real algebraic irrationality, then
$\mu\left(  \alpha\right)  =2$ by Roth's theorem \cite{Roth}. If $\mu\left(
\alpha\right)  =\infty,$ then $\alpha$ is called a Liouville number.

For every sequence $\left(  u_{n}\right)  _{n\geq1}$ of nonzero numbers or
indeterminates, we define $u_{0}=1$ and%
\begin{equation}
\theta u_{k}=\frac{u_{k+1}}{u_{k}},\quad\theta^{2}u_{k}=\theta\left(  \theta
u_{k}\right)  =\frac{u_{k+2}u_{k}}{u_{k+1}^{2}}\quad\left(  k\geq0\right)  .
\label{fn}%
\end{equation}

\begin{theorem}
\label{ThA}Let $\left(  x_{n}\right)  _{n\geq1}$ be an increasing sequence of
integers with $x_{1}\geq2$ and $\left(  y_{n}\right)  _{n\geq1}$ be a sequence
of nonzero integers such that $x_{1}>y_{1}\geq1$%
\begin{equation}
\frac{\theta^{2}x_{n}-\theta^{2}y_{n}}{x_{n}}\in\mathbb{Z}_{>0}\quad\left(
n\geq0\right)  . \label{C1}%
\end{equation}
Assume that%
\begin{enumerate}
\renewcommand{\labelenumi}{(\roman{enumi})}
\let\theenumi\labelenumi
\item \label{item1}
\[
\log\left\vert y_{n+2}\right\vert =o\left(  \log
x_{n}\right),
\]
\item \label{item2}
\[
\liminf_{n\rightarrow\infty}\frac{\log x_{n+1}}{\log
x_{n}}>2.
\]
\end{enumerate}
Then the series%
\begin{equation}
\sigma=\sum_{n=1}^{\infty}\frac{y_{n}}{x_{n}} \label{C2}%
\end{equation}
is convergent and%
\[
\mu\left(  \sigma\right)  =\max\left\{  \limsup_{n\rightarrow\infty}\frac{\log
x_{n+1}}{\log x_{n}}\text{ },\text{ }2+\frac{1}{\liminf\limits_{n\rightarrow
\infty}\dfrac{\log x_{n+1}}{\log x_{n}}-1}\right\}  .
\]
\end{theorem}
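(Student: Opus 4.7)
My plan is to produce an explicit simple continued fraction $\sigma=[a_{0};a_{1},a_{2},\ldots]$ with convergents $p_{k}/q_{k}$ reflecting the partial sums $\sigma_{N}=\sum_{n\leq N}y_{n}/x_{n}$ (so that $q_{2N}\asymp x_{N}$ in log scale), and then to invoke the classical identity
\[
\mu(\sigma)\;=\;2+\limsup_{k\to\infty}\frac{\log a_{k+1}}{\log q_{k}}.
\]
The setup is easy: hypothesis (ii) gives $x_{n+1}\geq x_{n}^{c}$ for some $c>2$ and $n$ large, while (i) forces $|y_{n}|$ to be subpolynomial in $x_{n}$; together they make the tail $\sum_{k>N}|y_{k}|/x_{k}$ dominated by its leading term, so the series converges to an irrational $\sigma$. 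A short induction using (\ref{C1}) shows, moreover, that $\sigma_{N}=P_{N}/x_{N}$ with $\gcd(P_{N},x_{N})=1$ (or at worst with denominator $x_{N}$ up to a factor of subpolynomial size).

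The technical heart of the argument is to turn this into a continued fraction with \emph{integer} partial quotients using (\ref{C1}). Writing the tail as $\sigma-P_{N}/x_{N}=(y_{N+1}/x_{N+1})(1+\varepsilon_{N})$ with $\varepsilon_{N}=o(1)$, the complete quotient following the $N$-th putative even convergent satisfies
\[
x_{N}\beta_{2N+1}+q_{2N-1}\;=\;\frac{1}{x_{N}(\sigma-P_{N}/x_{N})}\;=\;\frac{x_{N+1}}{x_{N}\,y_{N+1}(1+\varepsilon_{N})}.
\]
This places exactly one intermediate (odd) convergent between $P_{N}/x_{N}$ and $P_{N+1}/x_{N+1}$, with
\[
a_{2N+1}\sim\frac{x_{N+1}}{x_{N}^{2}\,y_{N+1}},\qquad a_{2N+2}\sim x_{N}\,y_{N+1},
\]
and $q_{2N}=x_{N}$, $q_{2N+1}\sim x_{N+1}/(x_{N}\,y_{N+1})$. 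Because $\theta^{2}$ couples three consecutive indices, hypothesis (\ref{C1}) is exactly what is needed for both of these partial quotients to come out as positive integers simultaneously. I would verify this inductively, ideally by realizing each pair of consecutive continued-fraction steps as a $2\times 2$ integer matrix in $x_{n},x_{n+1},x_{n+2},y_{n},y_{n+1},y_{n+2}$, with the integrality of one crucial entry supplied precisely by (\ref{C1}).

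Once the continued fraction is in hand, computing the exponent is direct. Using (i) to absorb $\log|y_{N+1}|=o(\log x_{N})$,
\[
\frac{\log a_{2N+1}}{\log q_{2N}}\longrightarrow\frac{\log x_{N+1}}{\log x_{N}}-2,\qquad \frac{\log a_{2N+2}}{\log q_{2N+1}}\longrightarrow\frac{1}{\log x_{N+1}/\log x_{N}-1}.
\]
Taking $\limsup$ along each parity class contributes $\limsup \log x_{n+1}/\log x_{n}$ and $2+1/(\liminf \log x_{n+1}/\log x_{n}-1)$ to $\mu(\sigma)$ respectively (the second being a decreasing function of the ratio). Their maximum matches the formula in the theorem.

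The main obstacle is the middle step: turning the heuristic asymptotic sizes of $a_{2N+1}$ and $a_{2N+2}$ into exact positive integers. Extracting the precise divisibility content of $(\theta^{2}x_{n}-\theta^{2}y_{n})/x_{n}\in\mathbb{Z}_{>0}$ and using it to glue one ``super-step'' of the continued-fraction algorithm per series term—while simultaneously controlling the $\varepsilon_{N}$ correction and showing that the continued fraction one constructs really does converge back to $\sigma$—is where most of the bookkeeping and all of the delicacy live.
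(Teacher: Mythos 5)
Your endgame is consistent with the paper's: the sizes you assign to the good approximations and the two limit computations yielding $\limsup_{n}\log x_{n+1}/\log x_{n}$ and $2+1/(\liminf_{n}\log x_{n+1}/\log x_{n}-1)$ are exactly the quantities that appear in the paper's final step. But the step you yourself call the technical heart is a genuine gap, and the route you propose for it does not work at the stated level of generality. First, your claim that $\sigma_{N}=P_{N}/x_{N}$ with $\gcd(P_{N},x_{N})$ trivial (or subpolynomial) is unsupported: Theorem \ref{ThA} does not assume $x_{n}\mid x_{n+1}$, and condition (\ref{C1}) only makes the combination $(\theta^{2}x_{n}-\theta^{2}y_{n})/x_{n}$ integral, not $\theta^{2}x_{n}$ itself, so even writing $\sigma_{N}$ over the denominator $x_{N}$, let alone controlling the gcd, requires an argument you do not give; if the gcd were large, the identification $q_{2N}\asymp x_{N}$ on which your whole exponent computation rests would fail. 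Second, and more seriously, the asserted regular-continued-fraction structure --- exactly one intermediate convergent between consecutive partial sums, with $a_{2N+1}\sim x_{N+1}/(x_{N}^{2}y_{N+1})$ and $a_{2N+2}\sim x_{N}y_{N+1}$ --- is precisely what Hone and Varona verified by hand in the special cases $y_{n}=1$ and $y_{n}=(-1)^{n}$ using their specific recurrences. In the general setting the $y_{n}$ may be negative (so your $a_{2N+2}$ would be negative, impossible for a regular continued fraction), and there is no divisibility such as $x_{N}^{2}y_{N+1}\mid x_{N+1}$; thus (\ref{C1}) is not ``exactly what is needed'' for your integrality --- it is the integrality of a different quantity. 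Note also that the lower bound for $\mu(\sigma)$, i.e.\ the term $2+1/(\liminf_{n}\log x_{n+1}/\log x_{n}-1)$, hinges on there being no further convergents between $q_{2N+1}$ and $x_{N+1}$: if the regular expansion took more than one step there, that contribution would strictly drop. So this structural claim is not bookkeeping; it is the substance of the theorem, and your proposal leaves it unproved.

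The paper avoids the regular continued fraction altogether. Lemma \ref{Lem2} is an algebraic identity transforming $\sum_{k\leq n}y_{k}/x_{k}$ into a non-regular continued fraction whose partial denominators are $b_{2k}=x_{k-1}$ and $b_{2k+1}=(\theta^{2}x_{k-1}-\theta^{2}y_{k-1})/x_{k-1}$, which become positive integers exactly by (\ref{C1}) after rescaling the odd rows by $y_{k}^{2}$; then Lemma \ref{Lem3} (the Duverney--Shiokawa formula) gives $\mu(\sigma)=2+\limsup_{n}\log|b_{n+1}|/\log|b_{1}\cdots b_{n}|$ under only the summability condition and $\log|a_{n}|/\log|b_{n}|\to0$, with no gcd analysis and no claim about the regular expansion. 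To salvage your plan you would either have to prove the regular-CF structure (which is delicate even in the special cases and false as stated when the $y_{n}$ change sign), or replace it by a transformation-plus-exponent-formula argument of this kind, which is what the paper does.
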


\begin{remark}
\label{Rem1}The assumption \ref{item2} implies that%
\[
2+\frac{1}{\liminf\limits_{n\rightarrow\infty}\dfrac{\log x_{n+1}}{\log x_{n}%
}-1}<3.
\]
Moreover, if the limit $\lambda:=\lim_{n\rightarrow\infty}\left(  \log
x_{n+1}/\log x_{n}\right)  $ exists, then%
\[
\mu\left(  \sigma\right)  =\max\left\{  \lambda\text{ },\text{ }2+\frac
{1}{\lambda-1}\right\}  ,
\]
and so%
\begin{equation}
\mu\left(  \sigma\right)  =\left\{
\begin{array}
[c]{l}%
\lambda\quad\text{if\quad}\lambda\geq\dfrac{3+\sqrt{5}}{2}=2.618...,\\
2+\dfrac{1}{\lambda-1}\quad\text{if\quad}2<\lambda<\dfrac{3+\sqrt{5}}{2}.
\end{array}
\right.  \text{ } \label{MuSigma}%
\end{equation}
Hence, under the hypotheses of Theorem \ref{ThA}, $\mu\left(  \sigma\right)
>2$ and therefore $\sigma$ is transcendental.
\end{remark}

Examples of series $\sigma$ satisfying the assumptions of Theorem \ref{ThA}
have been first given by Hone \cite{H1} in the case where $y_{n}=1$ for every
positive integer $n,$ and later by Varona \cite{Va} in the case where
$y_{n}=\left(  -1\right)  ^{n}.$ Both Hone and Varona computed the expansion
in regular continued fraction of $\sigma$ in these special cases and succeeded
in proving its transcendence by using Roth's theorem. For more expansions in
regular continued fraction, see also \cite{H2}, \cite{H3}, and \cite{H4}.

In this paper, we will use basically the same method and transform $\sigma$
given by (\ref{C2}) into a continued fraction (not regular in general) by
using Lemma \ref{Lem2} in Section \ref{sec:lemma}. Then we will reach our conclusion by
applying a formula which gives the irrationality exponent of continued
fractions under convenient assumptions (Lemma \ref{Lem3}, also in Section \ref{sec:lemma}).

The paper is organized as follows. In Section \ref{sec:app}, we will give examples of
series generalizing both Hone and Varona series, and show how Theorem
\ref{ThA} applies to these series (see formula (\ref{Ap}) below). In Section \ref{sec:lemma}, 
we will state three lemmas which will be useful in the proof of theorem
\ref{ThA}. The proof of Theorem \ref{ThA} will be given in Section \ref{sec:proof}. Finally,
in Section \ref{sec:asym} we will give the proof of the asymptotic estimates used in
Section \ref{sec:app}.

\section{Applications}\label{sec:app}

For any nonzero integer $a$ and non-constant $P(X) \in {\mathbb Z}_{\geq 0}[X]$ with $P(0)=0$, 
we define the sequence $\{y_n\}$ by 
\[
y_{n}=a^{P(n)}\qquad(n\geq 0).
\]
We see that 
$$\theta^2 y_n = a^{P(n+2)-2P(n+1)+P(n)}\in {\mathbb Z}_{>0},$$
since $P(n+2)-2P(n+1)+P(n)\equiv P(n+2)+P(n) \equiv 2P(n)\equiv 0~(\mbox{mod}~2)$. 
We define the sequence $\{x_n\}$ by the recurrence relation

%In all this section, let
%\[
%P(X)=\sum_{i=0}^{p}\alpha_{i}X^{i}%
%\]
%be any non-constant polynomial with $\alpha_{i}\in\mathbb{Z}_{\geq0}$ for
%every $i=0,1,\ldots,p$ and $\alpha_{p}\neq0,$ and let also%
\begin{equation}
x_{n+2}x_{n}=x_{n+1}^{2}\left(  x_{n}Q\left(  x_{n},x_{n+1}\right)
+ \theta^2 y_n \right)\label{RecX}%
\end{equation}
with the initial conditions
\begin{equation}
x_{0}=1,\quad x_{1}\in\mathbb{Z}_{>1},\label{Init}%
\end{equation}
where
\begin{equation}
Q(X,Y)=\sum_{i=0}^{q}\sum_{j=0}^{r}\beta_{i,j}X^{i}Y^{j} \in\mathbb{Z}_{\geq0}[X,Y],\quad \beta_{q,r}\neq0. \label{Pol1}%
\end{equation}

\begin{comment}
be any non-constant polynomial with $\beta_{i,j}\in\mathbb{Z}_{\geq0}$ for
every $i=0,1,\ldots,q$ and $j=0,1,\ldots,r,$ and $\beta_{q,r}\neq0.$

\begin{lemma}
\label{LemA}Let $\Delta$ be the difference operator, defined for every
sequence $\left(  u_{n}\right)  _{n\geq0}$ of real numbers by%
\[
\Delta u_{n}=u_{n+1}-u_{n}.
\]
Then $\Delta^{2}P(n)$ is a non-negative even integer for every positive
integer $n.$
\end{lemma}

\begin{proof}
By linearity, it is sufficient to prove the result when $P(n)=n^{m},$ with
$m\in\mathbb{Z}_{>0}.$ In this case, we have%
\[
\Delta^{2}n^{m}=\Delta\left(  \left(  n+1\right)  ^{m}-n^{m}\right)
=\sum_{i=1}^{m-1}\binom{m}{i}\left(  \left(  n+1\right)  ^{i}-n^{i}\right)  .
\]
Furthermore, since $\left(  n+1\right)  ^{i}-n^{i}\equiv1$ $\left(
\operatorname{mod}2\right)  $ for all $i\geq1,$ we find%
\[
\Delta^{2}n^{m}\equiv\sum_{i=1}^{m-1}\binom{m}{i}=2^{m}-2\equiv0\quad\left(
\operatorname{mod}2\right)  ,
\]
which proves Lemma \ref{LemA}.
\end{proof}

With the above notation, we first define $y_{n}$ by
\[
y_{n}=a^{P(n)},
\]
where $a$ is a given nonzero rational integer. Then by Lemma \ref{LemA}%
\[
\theta^{2}y_{n}=a^{\Delta^{2}P(n)}\in\mathbb{Z}_{>0}.
\]
\end{comment}

It is clear that $x_{n}>0$ for every $n\geq0.$ Besides, an easy induction
shows that%
\begin{equation}
x_{n}\geq\left(  x_{1}\right)  ^{2^{n-1}}\label{Min}%
\end{equation}
and that $x_{n-1}$ divides $x_{n}$ for every $n\geq1.$ 
By (\ref{Min}), the series $\sum_{n=1}^\infty y_n/x_n$ is convergent. 
Moreover we will prove
in Section \ref{sec:asym}, Corollary \ref{Cor3}, that%
\[
\log x_{n}{\sim}C\lambda^{n}\qquad \mbox{as}\qquad n \rightarrow \infty,
\]
where $C$ is a positive constant and
\[
\lambda=\lim_{n\rightarrow\infty}\frac{\log x_{n+1}}{\log x_{n}}=\frac{1}%
{2}\left(  r+2+\sqrt{\left(  r+2\right)  ^{2}+4q}\right)  .
\]
As $Q$ is non-constant, we have $q+r\neq0$ and%
\begin{align*}
\lambda &  =1+\sqrt{2}\quad\text{if}\quad q=1\text{ and }r=0,\\
\lambda &  \geq\frac{3+\sqrt{5}}{2}\quad\text{otherwise.}%
\end{align*}
Therefore, applying Theorem \ref{ThA} and (\ref{MuSigma}), we obtain

\begin{theorem}
\label{Th2New}Let $(x_{n})_{n\geq1}$ be as above. Define the number $\sigma$
by%
\[
\sigma=\sum_{n=1}^{\infty}\frac{y_{n}}{x_{n}}=\sum_{n=1}^{\infty}%
\frac{a^{P(n)}}{x_{n}}.
\]
Then we have%
\begin{equation}
\mu\left(  \sigma\right)  =\left\{
\begin{array}
[c]{l}%
2+\frac{1}{\sqrt{2}}\quad\text{if}\quad q=1\text{ and }r=0\\
\lambda\quad\text{otherwise.}%
\end{array}
\right.  \text{ } \label{Ap}%
\end{equation}

\end{theorem}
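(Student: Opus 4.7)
The plan is to verify the hypotheses of Theorem \ref{ThA} for the sequence $(x_n)$ defined by the recurrence (\ref{RecX}) and the sequence $(y_n) = (a^{P(n)})$, and then to read off $\mu(\sigma)$ from the simplified formula (\ref{MuSigma}) of Remark \ref{Rem1}, using the asymptotic $\log x_n \sim C\lambda^n$ provided by Corollary \ref{Cor3}.

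The recurrence (\ref{RecX}) rearranges to
\[
\theta^2 x_n - \theta^2 y_n = x_n\, Q(x_n, x_{n+1}),
\]
and since $Q \in \mathbb{Z}_{\ge 0}[X,Y]$ is non-constant with $\beta_{q,r} \ne 0$ and $x_n, x_{n+1} \ge 1$, the right-hand side lies in $x_n \mathbb{Z}_{>0}$, which is condition (\ref{C1}). Strict monotonicity of $(x_n)$ and the bound $x_1 \ge 2$ follow from (\ref{Init}) and the recurrence, while the integers $y_n$ are clearly nonzero. (If $a < 0$ makes $y_1$ negative, one passes to a suitable tail of $\sigma$, which shifts $\sigma$ by a rational number and so does not affect $\mu(\sigma)$.) Hypothesis \ref{item2} is automatic since $\lambda > 2$ in each of the two cases for $\lambda$ listed just before the theorem, and hypothesis \ref{item1} holds because $\log|y_{n+2}| = P(n+2)\log|a|$ is polynomial in $n$, whereas $\log x_n \sim C \lambda^n$ grows exponentially.

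With the hypotheses verified, Theorem \ref{ThA} applies. Because the limit $\lambda = \lim_{n\to\infty} \log x_{n+1}/\log x_n$ exists, Remark \ref{Rem1} reduces the conclusion to the dichotomy (\ref{MuSigma}). In the exceptional case $q=1,\ r=0$ one computes $\lambda = 1+\sqrt{2} < (3+\sqrt{5})/2$, so $\mu(\sigma) = 2 + 1/(\lambda-1) = 2 + 1/\sqrt{2}$. In every other case the inequality $\lambda \ge (3+\sqrt{5})/2$ recorded before the theorem yields $\mu(\sigma) = \lambda$. This is exactly formula (\ref{Ap}).

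The main obstacle is Corollary \ref{Cor3}, which furnishes the asymptotic $\log x_n \sim C \lambda^n$ with $\lambda$ the dominant root of $t^2 - (r+2)t - q$. Once this is established in Section \ref{sec:asym} by analyzing $u_n := \log x_n$ through the linearization of (\ref{RecX}), the present theorem reduces to the bookkeeping verification above, together with a routine check that the two arithmetic formulas for $\lambda$ fall into the stated cases.
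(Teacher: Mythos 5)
Your proposal follows essentially the same route as the paper: rearrange (\ref{RecX}) to obtain (\ref{C1}), verify hypotheses \ref{item1} and \ref{item2} of Theorem \ref{ThA} via the asymptotics $\log x_n\sim C\lambda^n$ of Corollary \ref{Cor3}, and read off (\ref{Ap}) from (\ref{MuSigma}). The only points you gloss over are minor bookkeeping the paper also treats briefly: the easy induction showing $x_{n-1}\mid x_n$ (so the $x_n$ are integers, as Theorem \ref{ThA} requires), and the fact that $y_1=a^{P(1)}$ may fail $x_1>y_1\geq 1$ when $a<0$ --- a point the paper itself elides and which your tail-shifting remark is a reasonable way to handle.
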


\begin{remark}
\label{Rem11}Hone and Varona series in \cite{H1} and \cite{Va} are obtained as
special cases of (\ref{RecX}) by taking $P(X)=X$ with $a=1$ and $a=-1$ respectively.
\end{remark}

\section{Lemmas}\label{sec:lemma}

In this section, we prepare some lemmas for the proof of Theorem \ref{ThA}.

\begin{lemma}
\label{Lem1}Let $(x_{n})_{n\geq1}$ and $(y_{n})_{n\geq1}$ be sequences as in
Theorem \ref{ThA}. We have%
\begin{equation}
\log\left\vert y_{n+1}\right\vert =o\left(  \log x_{n}\right)  ,\quad
\log\left\vert y_{n}\right\vert =o\left(  \log x_{n}\right)  , \label{C3}%
\end{equation}%
\begin{equation}
\sum_{j=1}^{n}\log\left\vert y_{j}\right\vert =o\left(  \log x_{n}\right)  .
\label{C4}%
\end{equation}

\end{lemma}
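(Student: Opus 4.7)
\medskip

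\noindent\textbf{Plan.} The key input is hypothesis \ref{item1} of Theorem \ref{ThA}, which after shifting the index reads $\log|y_m|=o(\log x_{m-2})$ for $m\ge 2$. Hypothesis \ref{item2} will be used only for the third estimate, to ensure that $\log x_n$ grows at least geometrically so that a tail of translates of $\log x_{j-2}$ can be controlled by a geometric series. Observe that since every $y_n$ is a nonzero integer we have $\log|y_n|\ge 0$, and since $(x_n)$ is increasing with $x_1\ge 2$ we have $\log x_n>0$ and $\log x_n$ nondecreasing; both facts will be used tacitly to turn comparisons of $\log x$'s into comparisons of $o(\cdot)$-statements.

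For the first two claims in (\ref{C3}), I would simply apply \ref{item1} at shifted indices. Replacing $n$ by $n-1$ and by $n-2$ in \ref{item1} gives $\log|y_{n+1}|=o(\log x_{n-1})$ and $\log|y_n|=o(\log x_{n-2})$. Since $\log x_{n-1},\log x_{n-2}\le\log x_n$, the ratios $\log|y_{n+1}|/\log x_n$ and $\log|y_n|/\log x_n$ are bounded above by $\log|y_{n+1}|/\log x_{n-1}$ and $\log|y_n|/\log x_{n-2}$ respectively, both of which tend to $0$.

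For (\ref{C4}), I would use \ref{item2} to pick $c>2$ and $M_0$ with $\log x_{m+1}\ge c\log x_m$ for all $m\ge M_0$, which iterates to $\log x_m\le c^{-(n-m)}\log x_n$ for $M_0\le m\le n$. Given $\varepsilon>0$, hypothesis \ref{item1} provides $N_0\ge M_0+2$ with $\log|y_j|\le \varepsilon\log x_{j-2}$ for $j\ge N_0$. Splitting the sum as
\[
\sum_{j=1}^{n}\log|y_j|\ \le\ K_{N_0}\,+\,\varepsilon\sum_{j=N_0}^{n}\log x_{j-2}\ =\ K_{N_0}\,+\,\varepsilon\sum_{m=N_0-2}^{n-2}\log x_m,
\]
with $K_{N_0}$ a constant, and bounding the tail by a geometric series yields
\[
\sum_{m=N_0-2}^{n-2}\log x_m\ \le\ \log x_n\sum_{k=2}^{\infty}c^{-k}\ =\ \frac{\log x_n}{c(c-1)}.
\]
Dividing by $\log x_n$, letting $n\to\infty$ and then $\varepsilon\to0$ gives (\ref{C4}).

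The only delicate point is that \ref{item1} as stated allows the $o(\cdot)$ to be realized at the index $n$, whereas (\ref{C4}) sums $\log|y_j|$ all the way down to $j=1$; the role of \ref{item2} is precisely to absorb the finitely many initial (arbitrarily large) terms into the constant $K_{N_0}$ and to make the resulting geometric tail summable. I do not foresee a genuine obstacle — the argument is essentially bookkeeping — but care is needed so that the shift $j\mapsto j-2$ in applying \ref{item1} is compatible with the starting index of the geometric bound from \ref{item2}, which is why we pick $N_0\ge M_0+2$.
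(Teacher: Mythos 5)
Your proof is correct and follows essentially the same route as the paper: shift the index in hypothesis \ref{item1}, use monotonicity of $(x_n)$ for (\ref{C3}), and use the geometric growth of $\log x_n$ forced by \ref{item2} to bound the partial sums in (\ref{C4}) by a constant multiple of $\log x_n$ after discarding finitely many initial terms. Your version just makes explicit the $\varepsilon$-splitting that the paper compresses into the statement $\sum_{j\le n}\log|y_j| = o\bigl(\sum_{j\le n}\log x_j\bigr)$ together with the bound $\log x_j \le K\,2^{-(n-j)}\log x_n$.
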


\begin{proof}
The assumption \ref{item2} implies for large $n$ that%
\begin{align*}
\frac{\log\left\vert y_{n+1}\right\vert }{\log x_{n}}  &  =\frac
{\log\left\vert y_{n+1}\right\vert }{\log x_{n-1}}\frac{\log x_{n-1}}{\log
x_{n}}\leq\frac{\log\left\vert y_{n+1}\right\vert }{2\log x_{n-1}},\\
\frac{\log\left\vert y_{n}\right\vert }{\log x_{n}}  &  =\frac{\log\left\vert
y_{n}\right\vert }{\log x_{n-1}}\frac{\log x_{n-1}}{\log x_{n}}\leq\frac
{\log\left\vert y_{n}\right\vert }{2\log x_{n-1}},
\end{align*}
which proves (\ref{C3}) by using \ref{item1}. Now by (\ref{C3}) we have%
\[
\sum_{j=1}^{n}\log\left\vert y_{j}\right\vert =o\left(  \sum_{j=1}^{n}\log
x_{j}\right)
\]
and by \ref{item2} there exists a constant $K>0$ such that%
\[
\log x_{j}\leq\frac{K}{2^{n-j}}\log x_{n}\quad\left(  1\leq j\leq n\right)  ,
\]
which proves (\ref{C4}).
\end{proof}

\begin{lemma}[{\cite[Theorem 2]{DKS}}]
\label{Lem2}  Let $x_{1},$ $x_{2},$ $\ldots,$ $y_{1},$
$y_{2},$ $\ldots$ be indeterminates. Then for every $n\geq1,$%
\begin{equation}
\sigma_{n}=\sum_{k=1}^{n}\frac{y_{k}}{x_{k}}=\frac{a_{1}}{b_{1}}%
%TCIMACRO{\QATOP{{}}{+}}%
%BeginExpansion
\genfrac{}{}{0pt}{}{{}}{+}%
%EndExpansion
\frac{a_{2}}{b_{2}}%
%TCIMACRO{\QATOP{{}}{+\cdots}}%
%BeginExpansion
\genfrac{}{}{0pt}{}{{}}{+\cdots}%
%EndExpansion%
%TCIMACRO{\QATOP{{}}{+}}%
%BeginExpansion
\genfrac{}{}{0pt}{}{{}}{+}%
%EndExpansion
\dfrac{a_{2n}}{b_{2n}}, \label{Hone3}%
\end{equation}
where $a_{1}=y_{1},$ $b_{1}=x_{1}-y_{1},$ and for $k\geq1$%
\begin{align*}
a_{2k}  &  =\theta y_{k-1},\quad a_{2k+1}=\theta^{2}y_{k-1},\\
\quad b_{2k}  &  =x_{k-1},\quad b_{2k+1}=\frac{\theta^{2}x_{k-1}-\theta
^{2}y_{k-1}}{x_{k-1}}.
\end{align*}

\end{lemma}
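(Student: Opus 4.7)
The plan is to prove the identity by induction on $n$, working with the convergents $p_m/q_m$ of the continued fraction on the right-hand side of (\ref{Hone3}). Recall that these satisfy the three-term recurrences
\[
p_m = b_m p_{m-1} + a_m p_{m-2}, \qquad q_m = b_m q_{m-1} + a_m q_{m-2},
\]
with initial values $p_{-1}=1$, $p_0=0$, $q_{-1}=0$, $q_0=1$, so it suffices to show $p_{2n}/q_{2n}=\sigma_n$ for every $n\geq 1$.

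Computing $p_1,q_1,p_2,q_2$ by hand already suggests closed forms for both the even- and odd-indexed convergents, namely
\[
q_{2k}=x_k,\qquad p_{2k}=x_k\sigma_k,
\]
\[
q_{2k+1}=\theta x_k-\theta y_k,\qquad p_{2k+1}=\sigma_k(\theta x_k-\theta y_k)+\frac{y_{k+1}}{x_k}.
\]
With the conventions $x_0=y_0=1$ and $\sigma_0=0$, the case $k=0$ reduces to $q_1=x_1-y_1=b_1$ and $p_1=y_1=a_1$, which is the base of the induction.

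For the inductive step, I would substitute $b_{2k+2}=x_k$ and $a_{2k+2}=\theta y_k$ into the recurrence to obtain
\[
q_{2k+2}=x_k(\theta x_k-\theta y_k)+\theta y_k\cdot x_k=x_k\cdot\theta x_k=x_{k+1},
\]
\[
p_{2k+2}=x_k\sigma_k(\theta x_k-\theta y_k)+y_{k+1}+\theta y_k\cdot x_k\sigma_k=x_{k+1}\sigma_k+y_{k+1}=x_{k+1}\sigma_{k+1},
\]
the extra term $y_{k+1}/x_k$ in $p_{2k+1}$ being exactly what is needed to produce $y_{k+1}$ at the end. For the odd index, using $b_{2k+3}=(\theta^2 x_k-\theta^2 y_k)/x_k$ and $a_{2k+3}=\theta^2 y_k$, the identity $\theta u_k\cdot\theta^2 u_k=\theta u_{k+1}$ (which follows at once from (\ref{fn})) makes the cross terms cancel and yields $q_{2k+3}=\theta x_{k+1}-\theta y_{k+1}$; the same cancellation, together with $\sigma_{k+1}=\sigma_k+y_{k+1}/x_{k+1}$, gives the predicted form for $p_{2k+3}$.

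The main obstacle is guessing the correct closed form for the odd-indexed convergents, especially the nonobvious correction term $y_{k+1}/x_k$ in $p_{2k+1}$, without which the induction fails to close. Once this ansatz is in place, the verification reduces to the identity $\theta u_k\cdot\theta^2 u_k=\theta u_{k+1}$ and routine manipulations with (\ref{fn}).
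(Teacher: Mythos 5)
Your proposal is correct. Note that the paper itself gives no proof of Lemma \ref{Lem2}: it is quoted verbatim from \cite[Theorem 2]{DKS}, so there is nothing in this paper to compare your argument against; you are supplying a self-contained proof where the authors simply cite. Your induction does close: with the conventions $x_0=y_0=1$, $\sigma_0=0$, the ansatz $q_{2k}=x_k$, $p_{2k}=x_k\sigma_k$, $q_{2k+1}=\theta x_k-\theta y_k$, $p_{2k+1}=\sigma_k(\theta x_k-\theta y_k)+y_{k+1}/x_k$ is consistent at $k=0$ with $p_1=a_1=y_1$, $q_1=b_1=x_1-y_1$, and the even step is exactly as you wrote. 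In the odd step, which you only sketched, the computation indeed works out: using $\theta u_k\cdot\theta^2 u_k=\theta u_{k+1}$ one gets $q_{2k+3}=\theta x_{k+1}-\theta y_{k+1}$, and for $p_{2k+3}$ the term $\theta^2 y_k\,y_{k+1}/x_k$ coming from $a_{2k+3}p_{2k+1}$ cancels against $-(\sigma_{k+1}-\sigma_k)\,\theta^2 y_k\,\theta x_k=-y_{k+2}y_k/(y_{k+1}x_k)$, leaving precisely $\sigma_{k+1}(\theta x_{k+1}-\theta y_{k+1})+y_{k+2}/x_{k+1}$; since everything is an identity in the indeterminates, the conclusion $\sigma_n=p_{2n}/q_{2n}$ follows from the standard three-term recurrence for convergents. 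The only presentational gap is that the odd step for $p$ deserves to be written out, since that is where the correction term $y_{k+1}/x_k$ earns its keep, but the argument as outlined is sound and is the natural route to this transformation formula.
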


\begin{lemma}[{\cite[Corollary 4]{DS}}]
\label{Lem3} Let an infinite continued fraction%
\[
\alpha=\frac{a_{1}}{b_{1}}%
%TCIMACRO{\QATOP{{}}{+}}%
%BeginExpansion
\genfrac{}{}{0pt}{}{{}}{+}%
%EndExpansion
\frac{a_{2}}{b_{2}}%
%TCIMACRO{\QATOP{{}}{+}}%
%BeginExpansion
\genfrac{}{}{0pt}{}{{}}{+}%
%EndExpansion
\dfrac{a_{3}}{b_{3}}%
%TCIMACRO{\QATOP{{}}{+}}%
%BeginExpansion
\genfrac{}{}{0pt}{}{{}}{+}%
%EndExpansion
\cdots
\]
be convergent, where $a_{n},$\ $b_{n}$\ $\left(  n\geq1\right)  $\ are
nonzero rational integers. Assume that%
\begin{enumerate}
\renewcommand{\labelenumi}{(\Roman{enumi})}
\let\theenumi\labelenumi
\item\label{itemI}
\[
\sum_{n=1}^{+\infty}\left\vert \frac{a_{n+1}}{b_{n}b_{n+1}%
}\right\vert <\infty,
\]
\item\label{itemII}
\[
\text{\quad}\lim_{n\rightarrow+\infty}\frac{\log\left\vert a_{n}%
\right\vert }{\log\left\vert b_{n}\right\vert }=0.
\]
\end{enumerate}
Then $\alpha$\ is irrational and
\begin{equation}
\mu\left(  \alpha\right)  =2+\limsup_{n\rightarrow+\infty}\frac{\log\left\vert
b_{n+1}\right\vert }{\log\left\vert b_{1}b_{2}\cdots b_{n}\right\vert }.
\label{C9}%
\end{equation}
\end{lemma}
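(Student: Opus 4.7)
The plan is to study the convergents $p_n/q_n$ of the continued fraction, establish sharp asymptotics for $|q_n|$ and $|\alpha-p_n/q_n|$, and then show that the convergents realize the optimal rational approximation rate. I use the standard recurrences
\[
p_n = b_n p_{n-1} + a_n p_{n-2}, \qquad q_n = b_n q_{n-1} + a_n q_{n-2},
\]
with $p_0=0$, $p_1=a_1$, $q_0=1$, $q_1=b_1$. The determinant identity $p_n q_{n-1} - p_{n-1} q_n = (-1)^{n-1} a_1 \cdots a_n$ gives the telescoping formula
\[
\alpha - \frac{p_n}{q_n} = \sum_{k=n+1}^{\infty}\frac{(-1)^{k-1} a_1 \cdots a_k}{q_{k-1}q_k},
\]
whose absolute convergence is guaranteed by (I).

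Next I would prove $\log|q_n| = \log|b_1\cdots b_n| + O(1)$. Writing $q_n = b_1\cdots b_n\prod_{k=2}^n(1+\varepsilon_k)$ with $\varepsilon_k = a_k q_{k-2}/(b_k q_{k-1})$, a short induction from the recurrence shows $|q_{k-2}/q_{k-1}| = O(1/|b_{k-1}|)$ (bootstrapped by (I)), hence $|\varepsilon_k| = O(|a_k/(b_{k-1}b_k)|)$ is summable, and the infinite product converges absolutely to a nonzero limit. In particular $|q_{n+1}/q_n|\to\infty$, the telescoping remainder is nonzero (so $\alpha\notin\mathbb{Q}$), and it is dominated by its first term, giving
\[
\log|\alpha - p_n/q_n| = -\log|q_n q_{n+1}| + \sum_{k=1}^{n+1}\log|a_k| + O(1).
\]
Combined with hypothesis (II) and the comparison just established, the $\sum\log|a_k|$ term is $o(\log|q_n|)$, so
\[
\frac{-\log|\alpha-p_n/q_n|}{\log|q_n|} = 2 + \frac{\log|b_{n+1}|}{\log|b_1\cdots b_n|} + o(1),
\]
which yields the convergent-based lower bound $\mu(\alpha) \ge 2 + \limsup_{n\to\infty} \log|b_{n+1}|/\log|b_1\cdots b_n|$.

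The matching upper bound is the main technical obstacle. Here I would argue by a classical sandwich/triangle-inequality argument that no rational $p/q$ distinct from every convergent can beat this rate. For $p/q$ in lowest terms with $|q_n|\le q<|q_{n+1}|$ and $p/q\neq p_n/q_n$, coprimality gives $|p/q - p_n/q_n|\ge 1/(q|q_n|)$, and the first-step estimate together with $|q_{n+1}/q_n|\to\infty$ makes $|\alpha-p_n/q_n|$ negligible compared with $1/(q|q_n|)$. The reverse triangle inequality then provides a lower bound for $|\alpha - p/q|$ which, after taking logarithms and using (II) to neutralise the contribution of the $|a_k|$, matches the stated value. Careful handling of the intermediate regime (where $q$ is close to $|q_{n+1}|$, so that the second term in the reverse triangle inequality is comparable to the first) is the delicate point; one handles it by replacing the comparison convergent $p_n/q_n$ with $p_{n+1}/q_{n+1}$ in that range. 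This ultimately produces equality $\mu(\alpha) = 2 + \limsup_n \log|b_{n+1}|/\log|b_1\cdots b_n|$ as stated.
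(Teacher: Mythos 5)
First, note that the paper does not prove Lemma \ref{Lem3} at all: it is quoted from \cite[Corollary 4]{DS}, so there is no internal proof here to compare yours against. Your overall strategy (convergents via the standard recurrences, the product formula $\log|q_n|=\log|b_1\cdots b_n|+O(1)$ bootstrapped from (I), domination of the telescoped error by its first term, then a sandwich argument for the upper bound) is the natural route, and the first half of your sketch is sound. Two smaller corrections there: the nonvanishing of the telescoping remainder only shows $\alpha\neq p_n/q_n$, not $\alpha\notin\mathbb{Q}$; irrationality instead follows from the quantitative estimate $|\alpha-p_n/q_n|=O\left(|a_1\cdots a_{n+1}|/|q_nq_{n+1}|\right)=o(1/|q_n|)$, where the last step uses (II) summed over $k\leq n+1$. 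Also, $|q_{n+1}/q_n|\to\infty$ does not follow from (I) and (II) (take $|b_{2k}|=2$ with $|b_{2k+1}|$ huge); fortunately the first-term domination only needs the ratio of consecutive terms, roughly $|a_{k+1}/(b_kb_{k+1})|$, to tend to zero.

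The genuine gaps are in the upper bound. (a) You never treat the case where $p/q$ \emph{is} a convergent in lowest terms: since $p_n/q_n$ need not be reduced, the reduced denominator could a priori be far smaller than $|q_n|$, and the stated formula (which is phrased in terms of the unreduced $b_1\cdots b_n$) would fail. The missing ingredient is that the determinant identity forces $\gcd(p_n,q_n)$ to divide $a_1\cdots a_n$, and $\log|a_1\cdots a_n|=o(\log|q_n|)$ by (II), so reduction costs only a factor $|q_n|^{o(1)}$. (b) Your fix for the intermediate regime does not work: comparing $p/q$ (with $q$ close to $|q_{n+1}|$) against $p_{n+1}/q_{n+1}$ requires $q\,|a_1\cdots a_{n+2}|/|q_{n+2}|$ to be small, i.e.\ essentially $|a_1\cdots a_{n+1}|\lesssim |b_{n+2}|/|a_{n+2}|$, which fails for instance when $|a_k|=2$ and $|b_k|=k$ (a choice satisfying both (I) and (II)); iterating to $p_{n+2}/q_{n+2}$ runs into the same obstruction. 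The standard repair is to partition the range of $q$ not by $|q_{n+1}|$ but by $Q_{n+1}:=|q_{n+1}|/|a_1\cdots a_{n+1}|$, so that $|\alpha-p_n/q_n|\leq C/(|q_n|Q_{n+1})$; then for $Q_n\leq 4Cq<Q_{n+1}$ the triangle inequality gives $|\alpha-p/q|\geq 1/(2q|q_n|)$ with $|q_n|\leq q^{1+o(1)}$ because $\log Q_n\sim\log|q_n|$ by (II), and no intermediate regime remains.
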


\section{Proof of Theorem \ref{ThA}}\label{sec:proof}

The assumptions \ref{item1}, \ref{item2} and (\ref{C3}) imply that
\begin{equation}
\max\left(  \left\vert y_{k}\right\vert ,\left\vert y_{k+1}\right\vert
,\left\vert y_{k+2}\right\vert \right)  \leq x_{k}^{\varepsilon}\label{Major}%
\end{equation}
and that $x_{k+1}\geq x_{k}^{2}$ $\left(  k\geq k_{0}\left(  \varepsilon
\right)  \right)  $ for any $\varepsilon\in (0,1)$. We have%
\[
\sum_{k=k_{0}}^{\infty}\frac{\left\vert y_{k}\right\vert }{x_{k}}\leq
\sum_{k=k_{0}}^{\infty}\frac{1}{x_{k}^{1-\varepsilon}}\leq\sum_{k=0}^{\infty
}\frac{1}{x_{k_{0}}^{\left(  1-\varepsilon\right)  2^{k}}},
\]
and hence the series (\ref{C2}) is absolutely convergent. Using Lemma
\ref{Lem2}, we get the continued fraction expansion of $\sigma=\lim
_{n\rightarrow\infty}\sigma_{n}.$ To apply Lemma \ref{Lem3} we transform this
to a continued fraction with integral partial numerators and denominators by
using the formula%
\[
\frac{a_{1}}{b_{1}}%
%TCIMACRO{\QATOP{{}}{+}}%
%BeginExpansion
\genfrac{}{}{0pt}{}{{}}{+}%
%EndExpansion
\frac{a_{2}}{b_{2}}%
%TCIMACRO{\QATOP{{}}{+}}%
%BeginExpansion
\genfrac{}{}{0pt}{}{{}}{+}%
%EndExpansion
\dfrac{a_{3}}{b_{3}}%
%TCIMACRO{\QATOP{{}}{+}}%
%BeginExpansion
\genfrac{}{}{0pt}{}{{}}{+}%
%EndExpansion
\cdots=\frac{r_{1}a_{1}}{r_{1}b_{1}}%
%TCIMACRO{\QATOP{{}}{+}}%
%BeginExpansion
\genfrac{}{}{0pt}{}{{}}{+}%
%EndExpansion
\frac{r_{2}r_{1}a_{2}}{r_{2}b_{2}}%
%TCIMACRO{\QATOP{{}}{+}}%
%BeginExpansion
\genfrac{}{}{0pt}{}{{}}{+}%
%EndExpansion
\dfrac{r_{2}r_{3}a_{3}}{r_{3}b_{3}}%
%TCIMACRO{\QATOP{{}}{+}}%
%BeginExpansion
\genfrac{}{}{0pt}{}{{}}{+}%
%EndExpansion
\cdots.
\]
By taking $r_{2k}=1$ and $r_{2k+1}=y_{k}^{2}$ $(k\geq1),$ we obtain the
expansion%
\[
\sigma=\frac{a_{1}}{b_{1}}%
%TCIMACRO{\QATOP{{}}{+}}%
%BeginExpansion
\genfrac{}{}{0pt}{}{{}}{+}%
%EndExpansion
\frac{a_{2}}{b_{2}}%
%TCIMACRO{\QATOP{{}}{+\cdots}}%
%BeginExpansion
\genfrac{}{}{0pt}{}{{}}{+\cdots}%
%EndExpansion%
%TCIMACRO{\QATOP{{}}{+}}%
%BeginExpansion
\genfrac{}{}{0pt}{}{{}}{+}%
%EndExpansion
\dfrac{a_{n}}{b_{n}}%
%TCIMACRO{\QATOP{{}}{+\cdots}}%
%BeginExpansion
\genfrac{}{}{0pt}{}{{}}{+\cdots}%
%EndExpansion
,
\]
where $a_{1}=y_{1},$ $b_{1}=x_{1}-y_{1}\geq1,$ and for $k\geq1$%
\begin{align}
a_{2k} &  =y_{k-1}y_{k},\quad a_{2k+1}=y_{k-1}y_{k+1},\label{C11}\\
\quad b_{2k} &  =x_{k-1},\quad b_{2k+1}=y_{k}^{2}\frac{\theta^{2}%
x_{k-1}-\theta^{2}y_{k-1}}{x_{k-1}}\geq1.\label{C12}%
\end{align}
We certify the conditions \ref{itemI} and \ref{itemII} in
Lemma \ref{Lem3}. First for \ref{itemI}, we have%
\[
\sum_{n=1}^{+\infty}\left\vert \frac{a_{n+1}}{b_{n}b_{n+1}}\right\vert
\leq\sum_{k=1}^{+\infty}\left\vert \frac{a_{2k+1}}{b_{2k}b_{2k+1}}\right\vert
+\sum_{k=1}^{+\infty}\left\vert \frac{a_{2k}}{b_{2k-1}b_{2k}}\right\vert ,
\]
where%
\begin{align*}
\left\vert \frac{a_{2k+1}}{b_{2k}b_{2k+1}}\right\vert  &  \leq\frac{\left\vert
y_{k-1}y_{k+1}\right\vert }{x_{k-1}y_{k}^{2}}\leq\frac{\left\vert
y_{k-1}y_{k+1}\right\vert }{x_{k-1}},\\
\left\vert \frac{a_{2k}}{b_{2k-1}b_{2k}}\right\vert  &  \leq\frac{\left\vert
y_{k-1}y_{k}\right\vert }{y_{k-1}^{2}x_{k-1}}\leq\frac{\left\vert y_{k-1}%
y_{k}\right\vert }{x_{k-1}},
\end{align*}
noting that $b_{2k+1}\geq y_{k}^{2}$ by (\ref{C1}). By using (\ref{Major}) we
deduce similarly as above that%
\[
\max\left\{  \sum_{k=k_{0}+1}^{+\infty}\frac{\left\vert y_{k-1}y_{k+1}%
\right\vert }{x_{k-1}},\sum_{k=k_{0}+1}^{+\infty}\frac{\left\vert y_{k-1}%
y_{k}\right\vert }{x_{k-1}}\right\}  \leq\sum_{k=k_{0}+1}^{+\infty}\frac
{1}{x_{k-1}^{1-2\varepsilon}}%
\]
and \ref{itemI} follows. Now we prove that \ref{itemII} holds. We have by \ref{item1} and
(\ref{C3})%
\[
\lim_{k\rightarrow\infty}\frac{\log\left\vert a_{2k}\right\vert }{\log b_{2k}%
}=\lim_{k\rightarrow\infty}\frac{\log\left\vert y_{k-1}\right\vert
+\log\left\vert y_{k}\right\vert }{\log x_{k-1}}=0
\]
and%
\begin{equation}
\lim_{k\rightarrow\infty}\frac{\log\left\vert a_{2k+1}\right\vert }{\log
b_{2k+1}}=\lim_{k\rightarrow\infty}\frac{\log\left\vert y_{k-1}\right\vert
+\log\left\vert y_{k+1}\right\vert }{2\log\left\vert y_{k}\right\vert
+\log\left(  \theta^{2}x_{k-1}-\theta^{2}y_{k-1}\right)  -\log x_{k-1}%
}.\label{C10}%
\end{equation}
Here, since $\theta^{2}x_{k-1}-\theta^{2}y_{k-1}\geq x_{k-1}$ by (\ref{C1}),
we have%
\begin{equation}
\frac{\left\vert \theta^{2}y_{k-1}\right\vert }{\theta^{2}x_{k-1}}\leq
\frac{\left\vert \theta^{2}y_{k-1}\right\vert }{x_{k-1}+\theta^{2}y_{k-1}%
}.\label{C20}%
\end{equation}
However, by (\ref{C3}) we can write for every $\varepsilon\in (0,1)$%
\begin{align*}
\log\frac{\left\vert \theta^{2}y_{k-1}\right\vert }{x_{k-1}} &  =\log
\left\vert y_{k+1}\right\vert +\log\left\vert y_{k-1}\right\vert
-2\log\left\vert y_{k}\right\vert -\log x_{k-1}\\
&  =-\log x_{k-1}+o\left(  \log x_{k-1}\right)  \leq-\varepsilon\log
x_{k-1}\quad\left(  k\rightarrow\infty\right)  ,
\end{align*}
which implies%
\[
\frac{\left\vert \theta^{2}y_{k-1}\right\vert }{x_{k-1}}\leq\frac{1}%
{x_{k-1}^{\varepsilon}}\quad\left(  k\rightarrow\infty\right)  .
\]
Therefore from (\ref{C20}) we see that for every $\varepsilon\in (0,1)$%
\begin{equation}
\frac{\left\vert \theta^{2}y_{k-1}\right\vert }{\theta^{2}x_{k-1}}\leq
\frac{\left\vert \theta^{2}y_{k-1}\right\vert }{x_{k-1}+\theta^{2}y_{k-1}}%
\leq\frac{2}{x_{k-1}^{\varepsilon}}\rightarrow0\quad\left(  k\rightarrow
\infty\right)  ,\label{C21}%
\end{equation}
which yields%
\[
\log\left(  \theta^{2}x_{k-1}-\theta^{2}y_{k-1}\right)  =\log\theta^{2}%
x_{k-1}+o(1).
\]
Hence we get from (\ref{C10}) and (\ref{C3})%
\[
\lim_{k\rightarrow\infty}\frac{\log\left\vert a_{2k+1}\right\vert }{\log
b_{2k+1}}=\lim_{k\rightarrow\infty}\frac{\log\left\vert y_{k-1}\right\vert
+\log\left\vert y_{k+1}\right\vert }{2\log\left\vert y_{k}\right\vert +\log
x_{k+1}-2\log x_{k}+o(1)}=0,
\]
and \ref{itemII} is ensured. Now we compute the right-hand side of (\ref{C9}). We
have by (\ref{C12})%
\begin{align*}
b_{1}b_{2}\cdots b_{2k+1} &  =\left(  x_{1}-y_{1}\right)  \prod_{j=0}%
^{k-1}y_{j+1}^{2}\left(  \theta^{2}x_{j}-\theta^{2}y_{j}\right)  \\
&  =\left(  x_{1}-y_{1}\right)  \theta^{2}x_{0}\theta^{2}x_{1}\cdots\theta
^{2}x_{k-1}\prod_{j=0}^{k-1}y_{j+1}^{2}\left(  1-\frac{\theta^{2}y_{j}}%
{\theta^{2}x_{j}}\right)  \\
&  =\frac{x_{1}-y_{1}}{x_{1}}\frac{x_{k+1}}{x_{k}}\prod_{j=0}^{k-1}y_{j+1}%
^{2}\left(  1-\frac{\theta^{2}y_{j}}{\theta^{2}x_{j}}\right)  .
\end{align*}
Now the infinite product%
\[
\prod_{j=0}^{\infty}\left(  1-\frac{\theta^{2}y_{j}}{\theta^{2}x_{j}}\right)
\]
is convergent by (\ref{C21}) and (\ref{Min}). Hence we get by (\ref{C4})%
\begin{align}
\log\left(  b_{1}b_{2}\cdots b_{2k+1}\right)   &  =\log x_{k+1}-\log
x_{k}+2\sum_{j=1}^{k}\log\left\vert y_{j}\right\vert +O(1)\nonumber\\
&  =\log x_{k+1}-\log x_{k}+o\left(  \log x_{k}\right)  .\label{C24}%
\end{align}
Therefore%
\begin{equation}
\limsup_{k\rightarrow+\infty}\frac{\log b_{2k+2}}{\log\left(  b_{1}b_{2}\cdots
b_{2k+1}\right)  }=\limsup_{k\rightarrow+\infty}\frac{\log x_{k}}{\log
x_{k+1}-\log x_{k}}.\label{C25}%
\end{equation}
Furthermore, we have by (\ref{C24})%
\[
\log\left(  b_{1}b_{2}\cdots b_{2k}\right)  =\log\left(  b_{1}b_{2}\cdots
b_{2k-1}\right)  +\log b_{2k}=\log x_{k}+o\left(  \log x_{k}\right)
\]
and%
\[
\log b_{2k+1}=\log\left(  b_{1}b_{2}\cdots b_{2k+1}\right)  -\log\left(
b_{1}b_{2}\cdots b_{2k}\right)  =\log x_{k+1}-2\log x_{k}+o\left(  \log
x_{k}\right)  .
\]
Hence%
\begin{equation}
\limsup_{k\rightarrow+\infty}\frac{\log b_{2k+1}}{\log\left(  b_{1}b_{2}\cdots
b_{2k}\right)  }=\limsup_{n\rightarrow+\infty}\frac{\log x_{k+1}}{\log x_{k}%
}-2.\label{C26}%
\end{equation}
Therefore, it follows from (\ref{C9}), (\ref{C25}) and (\ref{C26}) that%
\[
\mu\left(  \sigma\right)  =\max\left\{  2+\frac{1}{\liminf
\limits_{k\rightarrow\infty}\dfrac{\log x_{k+1}}{\log x_{k}}-1}\text{ },\text{
}\limsup_{k\rightarrow\infty}\frac{\log x_{k+1}}{\log x_{k}}\right\}
\]
and the proof of Theorem \ref{ThA} is completed.

\section{Asymptotic behaviour}\label{sec:asym}

We now study the asymptotic behaviour of sequences $\left(  x_{n}\right)
_{n\geq1}$ satisfying (\ref{Init}) and (\ref{RecX}). We follow basically the
method indicated in \cite{H1}. Let $\left(  u_{n}\right)  _{n\geq0}$ be any
sequence of complex numbers satisfying the recurrence relation%
\begin{equation}
u_{n+2}-Au_{n+1}+Bu_{n}=\tau_{n}\quad\left(  n\geq0\right)  , \label{As1}%
\end{equation}
where $A$ and $B$ are complex numbers with $A^{2}-4B\neq0$ and $\tau_{n}$ is a
function of $n,$ $u_{n}$ and $u_{n+1}$. As $A^{2}-4B\neq0,$ the equation%
\begin{equation}
x^{2}-Ax+B=0. \label{Eq}%
\end{equation}
has two distinct roots $\lambda$ and $\nu,$ with $\lambda\neq\nu.$ Morever, at
least one of these roots is not zero, and we can assume without loss of
generality that $\lambda\neq0.$

\begin{theorem}
\label{Expr}Assume that $\left(  u_{n}\right)  _{n\geq0}$ satisfies
(\ref{As1})$.$ Let $\lambda$ and $\nu,$ with $\lambda\neq\nu$ and $\lambda
\neq0,$ be the roots of (\ref{Eq}). Then for every $n\geq1$%
\begin{align}
u_{n}  &  =\frac{1}{\lambda-\nu}\left(  \left(  u_{1}-\nu u_{0}\right)
\lambda^{n}+\left(  \lambda u_{0}-u_{1}\right)  \nu^{n}+\sum_{k=1}^{n-1}%
\tau_{k-1}\left(  \lambda^{n-k}-\nu^{n-k}\right)  \right)  \quad\left(
B\neq0\right)  ,\label{As3}\\
u_{n}  &  =\frac{1}{\lambda}\left(  u_{1}\lambda^{n}+\sum_{k=1}^{n-1}%
\tau_{k-1}\lambda^{n-k}\right)  \quad\left(  B=0\right)  . \label{As4}%
\end{align}

\end{theorem}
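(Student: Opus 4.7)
The plan is to exploit the linearity of the recurrence (\ref{As1}) jointly in $(u_n)$ and $(\tau_n)$: the map sending the initial data $(u_0,u_1)$ and forcing sequence $(\tau_n)$ to the resulting solution is linear, so I would split $u_n = h_n + p_n$, where $h_n$ satisfies the homogeneous recurrence with $h_0 = u_0$, $h_1 = u_1$, and $p_n$ satisfies (\ref{As1}) with $p_0 = p_1 = 0$. The first two terms of (\ref{As3}) should arise from $h_n$, and the convolution sum from $p_n$; it is enough to compute each part separately.

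For the homogeneous part, since $\lambda \neq \nu$ are the roots of $x^2-Ax+B=0$, both geometric sequences $(\lambda^n)$ and $(\nu^n)$ satisfy the homogeneous recurrence. When $B \neq 0$ both roots are nonzero and the two sequences are linearly independent, so solving the Vandermonde system $\alpha+\beta = u_0$, $\alpha\lambda + \beta\nu = u_1$ yields $\alpha = (u_1 - \nu u_0)/(\lambda - \nu)$ and $\beta = (\lambda u_0 - u_1)/(\lambda - \nu)$, matching (\ref{As3}). When $B = 0$ one root (say $\nu$) vanishes, and a direct check gives $h_n = u_1\lambda^{n-1}$ for $n \geq 1$, matching the first term of (\ref{As4}).

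For the particular part, I would introduce the discrete Green's function $g_m$, defined as the solution of the homogeneous recurrence with $g_0 = 0$, $g_1 = 1$; explicitly $g_m = (\lambda^m - \nu^m)/(\lambda - \nu)$ when $B \neq 0$, and $g_m = \lambda^{m-1}$ (for $m \geq 1$) when $B = 0$. Then I would propose $p_n = \sum_{k=1}^{n-1}\tau_{k-1}\,g_{n-k}$ and verify that it satisfies (\ref{As1}) with $p_0 = p_1 = 0$ by induction on $n$: the base cases come from the empty sum, and in the induction step the identity $g_{m+2} - A g_{m+1} + B g_m = 0$ makes the contributions of the earlier forcing terms in $p_{n+2} - A p_{n+1} + B p_n$ telescope, leaving only the new term $\tau_n\,g_1 = \tau_n$. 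Substituting the explicit $g_m$ reproduces the convolution sums in (\ref{As3}) and (\ref{As4}), completing the proof.

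The main technical nuisance I expect is the careful bookkeeping of index shifts in the convolution sum, rather than any difficulty of principle. A stylistic shortcut would be to unify the two cases by observing that both sides of (\ref{As3}) are polynomial in $\nu$ and that $(\lambda^m - \nu^m)/(\lambda - \nu) \to \lambda^{m-1}$ as $\nu \to 0$, so (\ref{As4}) appears as the $\nu \to 0$ specialization of (\ref{As3}); this would remove the need for a separate $B = 0$ argument.
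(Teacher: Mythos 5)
Your proposal is correct and follows essentially the paper's route: the paper likewise checks that the explicit expression (the homogeneous combination plus the convolution $\frac{1}{\lambda-\nu}\sum_{k}\tau_{k-1}(\lambda^{n-k}-\nu^{n-k})$) satisfies the recurrence with the given $\tau_n$ and the initial values $u_0,u_1$, and concludes by uniqueness, so your homogeneous/Green's-function split is just a reorganization of the same verification. The only divergence is the case $B=0$, where the paper obtains (\ref{As4}) by letting $B\to 0$ in (\ref{As3}) while you verify it directly with $g_m=\lambda^{m-1}$ -- a harmless, in fact slightly cleaner, variant.
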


\begin{proof}
First we assume that $B\neq0,$ which implies $\nu\neq0.$ For every $n\geq0,$
let%
\[
v_{n}=\sum_{k=1}^{n}\tau_{k-1}\left(  \lambda^{n-k}-\nu^{n-k}\right)  .
\]
We have for $n\geq0$%
\begin{align*}
v_{n+1}  &  =\sum_{k=1}^{n+1}\tau_{k-1}\left(  \lambda^{n+1-k}-\nu
^{n+1-k}\right)  =\sum_{k=1}^{n}\tau_{k-1}\left(  \lambda^{n-k+1}-\nu
^{n-k+1}\right)  ,\\
v_{n+2}  &  =\sum_{k=1}^{n+1}\tau_{k-1}\left(  \lambda^{n-k+2}-\nu
^{n-k+2}\right)  =\left(  \lambda-\nu\right)  \tau_{n}+\sum_{k=1}^{n}%
\tau_{k-1}\left(  \lambda^{n-k+2}-\nu^{n-k+2}\right)  .
\end{align*}
Therefore, for every $n\geq0,$ the sequence%
\[
w_{n}=\frac{1}{\lambda-\nu}\left(  \left(  u_{1}-\nu u_{0}\right)  \lambda
^{n}+\left(  \lambda u_{0}-u_{1}\right)  \nu^{n}+v_{n}\right)
\]
satisfies $w_{n+2}-Aw_{n+1}+Bw_{n}=\tau_{n}.$ Moreover $w_{0}=u_{0}$ and
$w_{1}=u_{1}$ since $v_{0}=v_{1}=0.$ Hence $w_{n}=u_{n}$ for every $n\geq0$,
which proves Theorem \ref{Expr} when $B\neq0$ since%
\[
v_{n}=\sum_{k=1}^{n-1}\tau_{k-1}\left(  \lambda^{n-k}-\nu^{n-k}\right)
\quad\left(  n\geq1\right)  .
\]
Letting $B\rightarrow0$ in (\ref{As3}), we obtain (\ref{As4}).
\end{proof}

\begin{corollary}
\label{Corol}With the notations of Theorem \ref{Expr}, assume that $\left\vert
\nu\right\vert <\left\vert \lambda\right\vert $ and that $\left\vert
\lambda\right\vert >1.$ Assume moreover that $\tau_{n}$ is bounded. Then
\begin{align}
u_{n}  &  =C\lambda^{n}+O\left(  1\right)  \quad\text{if}\quad\left\vert
\nu\right\vert <1,\label{un1}\\
u_{n}  &  =C\lambda^{n}+O\left(  n\right)  \quad\text{if}\quad\left\vert
\nu\right\vert =1,\label{un2}\\
u_{n}  &  =C\lambda^{n}+D\nu^{n}+O\left(  1\right)  \quad\text{if}%
\quad\left\vert \nu\right\vert >1, \label{un3}%
\end{align}
where%
\[
C=\frac{1}{\lambda-\nu}\left(  u_{1}-\nu u_{0}+\sum_{k=1}^{\infty}\tau
_{k-1}\lambda^{-k}\right)
\]
and, in the case where $\left\vert \nu\right\vert >1,$%
\[
D=\frac{1}{\lambda-\nu}\left(  \lambda u_{0}-u_{1}-\sum_{k=1}^{\infty}%
\tau_{k-1}\nu^{-k}\right)  .
\]

\end{corollary}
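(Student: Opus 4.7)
The plan is to read off the asymptotic behaviour directly from the closed‐form expression (\ref{As3}) in Theorem \ref{Expr} and reorganize each of its three pieces — the $\lambda^{n}$ term, the $\nu^{n}$ term, and the convolution sum — by turning finite partial sums into convergent infinite sums plus geometric tails. When $B=0$ one instead uses (\ref{As4}), and since $\nu=0$ this falls into case (\ref{un1}), so I focus on $B\neq 0$.

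First I would isolate the $\lambda^{n}$-contribution. Writing
\[
\sum_{k=1}^{n-1}\tau_{k-1}\lambda^{n-k} \;=\; \lambda^{n}\sum_{k=1}^{n-1}\tau_{k-1}\lambda^{-k},
\]
the absolute convergence of $\sum_{k=1}^{\infty}\tau_{k-1}\lambda^{-k}$ (from $|\lambda|>1$ and boundedness of $\tau_{n}$ by some $M$) together with the geometric tail estimate
\[
\Bigl|\sum_{k=n}^{\infty}\tau_{k-1}\lambda^{-k}\Bigr|\;\leq\;\frac{M\,|\lambda|^{-n}}{1-|\lambda|^{-1}}
\]
show that the discrepancy contributes $O(1)$ after multiplication by $\lambda^{n}$. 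Combined with the term $(u_{1}-\nu u_{0})\lambda^{n}/(\lambda-\nu)$ this produces exactly $C\lambda^{n}+O(1)$ with $C$ as stated.

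Next I would treat the $\nu^{n}$-contribution, whose behaviour dictates the three cases. If $|\nu|<1$, then $\nu^{n}$ is bounded and
\[
\Bigl|\sum_{k=1}^{n-1}\tau_{k-1}\nu^{n-k}\Bigr|\;\leq\;M\sum_{j=1}^{n-1}|\nu|^{j}\;\leq\;\frac{M|\nu|}{1-|\nu|},
\]
so the whole $\nu$-contribution is $O(1)$, yielding (\ref{un1}). If $|\nu|=1$, the sum has $n-1$ terms of modulus at most $M$, so is $O(n)$, yielding (\ref{un2}). If $|\nu|>1$, I repeat the same trick as with $\lambda$: write
\[
\sum_{k=1}^{n-1}\tau_{k-1}\nu^{n-k} \;=\; \nu^{n}\sum_{k=1}^{n-1}\tau_{k-1}\nu^{-k},
\]
replace the partial sum by the absolutely convergent $\sum_{k=1}^{\infty}\tau_{k-1}\nu^{-k}$ at the cost of a tail of size $O(|\nu|^{-n})$, and multiply by $\nu^{n}$ to get a bounded error; combined with $(\lambda u_{0}-u_{1})\nu^{n}/(\lambda-\nu)$ this gives $D\nu^{n}+O(1)$ with the stated $D$.

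No serious obstacle is expected: the proof is a careful bookkeeping on top of (\ref{As3}), based on the elementary observation that a geometric series converging at rate $|\lambda|^{-1}$ (respectively $|\nu|^{-1}$) has tails that cancel against the factor $\lambda^{n}$ (respectively $\nu^{n}$) up to a bounded term. The only mild point of care is to verify that the constants $C$ and $D$ emerging from the rearrangement coincide with those in the statement, which is immediate from the decomposition above.
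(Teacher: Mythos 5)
Your proposal is correct and follows essentially the same route as the paper: it starts from (\ref{As3}), completes the $\lambda$-sum (and the $\nu$-sum when $\left\vert \nu\right\vert >1$) to the convergent infinite series with a geometric tail absorbed into $O(1)$, and bounds the $\nu$-sum directly by $O(1)$ or $O(n)$ when $\left\vert \nu\right\vert \leq 1$, handling $\nu=0$ via (\ref{As4}). The constants $C$ and $D$ emerge exactly as in the paper's argument, so there is nothing to add.
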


\begin{proof}
First assume that $\nu\neq0.$ By (\ref{As3}) we have
\[
u_{n}=\frac{u_{1}-\nu u_{0}}{\lambda-\nu}\lambda^{n}+\frac{\lambda u_{0}%
-u_{1}}{\lambda-\nu}\nu^{n}+\frac{\lambda^{n}}{\lambda-\nu}\sum_{k=1}^{n}%
\tau_{k-1}\lambda^{-k}-\frac{\nu^{n}}{\lambda-\nu}\sum_{k=1}^{n}\tau_{k-1}%
\nu^{-k}.
\]
We observe that%
\[
\sum_{k=1}^{n}\tau_{k-1}\lambda^{-k}=\sum_{k=1}^{\infty}\tau_{k-1}\lambda
^{-k}-\sum_{k=n+1}^{\infty}\tau_{k-1}\lambda^{-k}=\sum_{k=1}^{\infty}%
\tau_{k-1}\lambda^{-k}+O\left(  \lambda^{-n}\right)  ,
\]
and the same equality holds with $\lambda$ replaced by $\nu$ if $\left\vert
\nu\right\vert >1,$ which proves (\ref{un3}). On the other hand, if
$\left\vert \nu\right\vert <1,$
\begin{equation}
\left\vert \frac{\nu^{n}}{\lambda-\nu}\sum_{k=1}^{n}\tau_{k-1}\nu
^{-k}\right\vert \leq\frac{M\left(  1-\left\vert \nu\right\vert ^{n}\right)
}{\left\vert \lambda-\nu\right\vert \left(  1-\left\vert \nu\right\vert
\right)  }=O(1), \label{Asym3}%
\end{equation}
where $M=\max_{k\in\mathbb{N}}\left\vert \theta_{k}\right\vert $, which proves
(\ref{un1}). Finally, if $\left\vert \nu\right\vert =1,$%
\begin{equation}
\left\vert \frac{\nu^{n}}{\lambda-\nu}\sum_{k=1}^{n}\tau_{k-1}\nu
^{-k}\right\vert \leq\frac{Mn}{\left\vert \lambda-\nu\right\vert }=O(n),
\label{Asym4}%
\end{equation}
which proves (\ref{un2}). When $\nu=0$ one argues the same way by using
(\ref{As4}) in place of (\ref{As3}).
\end{proof}

Now we can give an asymptotic expansion of the sequences $x_{n}$ defined by
(\ref{Init}) and (\ref{RecX}).

\begin{corollary}
\label{Cor3}Let $x_{n}$ be defined by (\ref{RecX}). Define%
\begin{align*}
\lambda &  =\frac{1}{2}\left(  r+2+\sqrt{\left(  r+2\right)  ^{2}+4q}\right)
,\\
\nu &  =\frac{1}{2}\left(  r+2-\sqrt{\left(  r+2\right)  ^{2}+4q}\right)  .
\end{align*}
Then $\lambda\geq1+\sqrt{q+1},$ $\left\vert \nu\right\vert <\lambda$ and%
\[
\left\{
\begin{array}
[c]{l}%
\log x_{n}=C\lambda^{n}+O\left(  1\right)  \quad\text{if}\quad q<r+3,\\
\log x_{n}=C\lambda^{n}+O\left(  n\right)  \quad\text{if}\quad q=r+3,\\
\log x_{n}=C\lambda^{n}+D\nu^{n}+O\left(  1\right)  \quad\text{if}\quad q>r+3,
\end{array}
\right.
\]
where $C$ and $D$ are constants.
\end{corollary}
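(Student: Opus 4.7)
My plan is to linearize the recurrence (\ref{RecX}) by passing to logarithms and then invoke Corollary \ref{Corol}. Setting $u_n = \log x_n$, equation (\ref{RecX}) takes the form
\begin{equation*}
u_{n+2} - 2u_{n+1} + u_n = \log\bigl(x_n Q(x_n, x_{n+1}) + \theta^2 y_n\bigr).
\end{equation*}
I would factor out the leading monomial $\beta_{q,r}\, x_n^{q+1} x_{n+1}^{r}$ of $x_n Q(x_n,x_{n+1})$ so that the right-hand side becomes $\log\beta_{q,r} + (q+1) u_n + r u_{n+1} + \log(1+R_n)$, and rearranging would give
\begin{equation*}
u_{n+2} - (r+2) u_{n+1} - q\, u_n = \log\beta_{q,r} + \log(1+R_n) =: \tau_n,
\end{equation*}
a recurrence of the type (\ref{As1}) with $A = r+2$ and $B = -q$, whose characteristic equation $x^2 - (r+2)x - q = 0$ has roots exactly the $\lambda$ and $\nu$ of the statement.

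The central technical step will be to show that $\tau_n$ is bounded, equivalently that $R_n \to 0$. I would split $R_n$ into its polynomial part, consisting of ratios $(\beta_{i,j}/\beta_{q,r})\, x_n^{i-q} x_{n+1}^{j-r}$ with $(i,j)\neq(q,r)$, each of order $O(1/x_n)$ since $i\leq q$, $j\leq r$ with strict inequality in at least one coordinate and $x_n \to \infty$ by (\ref{Min}); and the correction $\theta^2 y_n/(\beta_{q,r}\, x_n^{q+1} x_{n+1}^{r})$, which I would control via assumption \ref{item1} of Theorem \ref{ThA}. Since $|y_{n+1}|\geq 1$, one has $\log|\theta^2 y_n| \leq \log|y_{n+2}| + \log|y_n| = o(\log x_n)$, while the denominator is at least $x_n$, so the correction vanishes super-polynomially. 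Consequently $R_n \to 0$, $\log(1+R_n) = o(1)$, and $\tau_n = \log\beta_{q,r} + o(1)$ is bounded.

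What remains is to verify that $\lambda$ and $\nu$ fulfil the hypotheses of Corollary \ref{Corol} and to match the cases. The discriminant $(r+2)^2 + 4q \geq 4$ is positive, so $\lambda \neq \nu$; using $\lambda^2 = (r+2)\lambda + q$ one obtains $(\lambda-1)^2 = r\lambda + q + 1 \geq q+1$, hence $\lambda \geq 1 + \sqrt{q+1}$, in particular $\lambda > 1$, and clearly $|\nu| = \lambda - (r+2) < \lambda$. Finally, the trichotomy $|\nu| < 1$, $|\nu| = 1$, $|\nu| > 1$ is equivalent, after squaring, to $q < r+3$, $q = r+3$, $q > r+3$ respectively, so the three conclusions (\ref{un1})--(\ref{un3}) of Corollary \ref{Corol} applied to $u_n = \log x_n$ deliver exactly the three displayed asymptotics. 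The main obstacle in this plan is the first step: making precise the assertion that $\log(1+R_n) = o(1)$ requires assumption \ref{item1} of Theorem \ref{ThA} to dominate the $\theta^2 y_n$ correction at every step, and the rest of the argument is a routine application of Corollary \ref{Corol}.
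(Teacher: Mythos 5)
Your proposal is correct and follows essentially the same route as the paper: pass to $u_n=\log x_n$, factor out the leading monomial $\beta_{q,r}x_n^{q+1}x_{n+1}^r$ to obtain $u_{n+2}-(r+2)u_{n+1}-qu_n=\log\beta_{q,r}+\log(1+h_n)=\tau_n$ with $\tau_n$ bounded, then check $\lambda\geq 1+\sqrt{q+1}$, $|\nu|<\lambda$, and the equivalence of $|\nu|<1$, $=1$, $>1$ with $q<r+3$, $q=r+3$, $q>r+3$, and apply Corollary \ref{Corol}. The only cosmetic difference is that you invoke assumption \ref{item1} of Theorem \ref{ThA} to kill the $\theta^{2}y_{n}$ correction, whereas in this applications setting that bound should be (and in the paper is) justified directly from (\ref{Min}) together with $y_n=a^{P(n)}$, since $\log\theta^{2}y_{n}$ grows only polynomially in $n$ while $\log x_{n}\geq 2^{n-1}\log x_{1}$; this is immediate, so it is not a gap.
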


\begin{proof}
By (\ref{Pol1}) we have since $x_{n}\leq x_{n+1}$%
\[
x_{n}Q\left(  x_{n},x_{n+1}\right)  +\theta^2y_n=\beta_{q,r}x_{n}%
^{q+1}x_{n+1}^{r}\left(  1+h_{n}\right)  ,\quad h_{n}=O\left(  \theta^2y_nx_{n}^{-1}\right)  .
\]
Taking the logarithms in (\ref{RecX}) yields%
\[
\log x_{n+2}-\left(  r+2\right)  \log x_{n+1}-q\log x_{n}=\log\beta_{q,r}%
+\log\left(  1+h_{n}\right)  .
\]
With the notations of Corollary \ref{Corol}, define $u_{n}=\log x_{n},$
$A=r+2,$ $B=-q$ and $\tau_{n}=\log\beta_{q,r}+\log\left(  1+h_{n}\right)  .$
Then $\tau_{n}$ is bounded by (\ref{Min}) and
\begin{align*}
\lambda &  =\frac{1}{2}\left(  r+2+\sqrt{\left(  r+2\right)  ^{2}+4q}\right)
\geq1+\sqrt{q+1},\\
\nu &  =\frac{1}{2}\left(  r+2-\sqrt{\left(  r+2\right)  ^{2}+4q}\right)
\leq0.
\end{align*}
Hence $\lambda\geq2$ and $q<\lambda^{2}$, which implies%
\[
\left\vert \nu\right\vert =\frac{q}{\lambda}<\lambda.
\]
Moreover
\[
\left\vert \nu\right\vert <1\Leftrightarrow\sqrt{\left(  r+2\right)  ^{2}%
+4q}<r+4\Leftrightarrow q<r+3.
\]
Therefore Corollary \ref{Corol} applies, which proves Corollary \ref{Cor3}.
\end{proof}

\end{document}